\DeclareMathOperator*{\argmin}{arg\,min} 
\newtheorem{theorem}{Theorem}[section]
\newtheorem{lemma}[theorem]{Lemma}
\begin{document}

\title{Form Convex Hull to Concavity: \\ Surface Contraction Around a Point Set}
\author{\Large{Netzer Moriya}}

\date{}

\maketitle

\begin{abstract}
This paper investigates the transformation of a convex hull, derived from a d-dimensional point cloud, into a concave 
surface (\( S^{cc} \)). Our primary focus is on the development of a methodology that ensures all points in the point cloud 
are encapsulated within a closed, non-intersecting concave surface. 
The study begins with the initial convex hull (\( S^{ch} \)) and employs an iterative process of facet replacement and expansion 
to evolve the surface into \( S^{cc} \), which accurately conforms to the complex geometry of the point cloud.

Central to our analysis is the formulation and proof of the \textit{Point Cloud Contraction Theorem} – which provides a 
theoretical foundation for the transformation process. This theorem rigorously establishes that the iterative approach will 
result in a surface that encompasses all points in the point cloud, thereby lending mathematical rigor to our methodology.

Through empirical analysis, we demonstrate the effectiveness of this process in various scenarios, highlighting the 
method's capability to include all points of the cloud while preserving topological integrity. The research additionally explores 
the computational complexity of the transformation process, revealing a range between \( O(n^2) \) in complex cases 
to \( O((n-m) \log (n-m)) \) in more typical scenarios. These findings have practical implications in areas such as 3D reconstruction, 
computational geometry, or spatial data analysis, where precise modeling of both convex and concave structures is crucial.

\end{abstract}

\section{Introduction}
We investigate the transformation of a convex hull, derived from a d-dimensional point cloud, into a more comprehensive 
concave surface, denoted as \( S^{cc} \). This surface is unique in that it encompasses all points \( p_i \) of the point 
cloud \( P \), and is constrained to be closed. We explore the dynamics of this process and examine its relationship to the 
concept of concavity in geometric terms..

The convex hull, represented as \( S^{ch} \), is the smallest convex set that encloses all points in a given d-dimensional space 
\cite{Loffler:2010, Moriya:2023smallest}. 
The modification of \( S^{ch} \) to include additional points from \( P \) leads to a transformation resulting in the formation of 
concave regions \cite{Karzanov:2005}. This change is characterized by the emergence of inward curves or hollows, 
where the internal angles exceed 180 degrees. 
The transition from \( S^{ch} \) to \( S^{cc} \) is iterative, continuing until all points from \( P \) are incorporated onto the 
surface of \( S^{cc} \).

\paragraph{}
We highlight the following geometric implications arising from the transformation of \( S^{ch} \) to \( S^{cc} \):

\begin{itemize}
  \item The construction of \( S^{cc} \) underscores the intrinsically non-convex characteristics of the original point 
  cloud \( P \), revealing its complex geometry.
  \item This transformation is critical for accurately depicting the true geometry of the point set, particularly in the presence 
  of marked concavities.
  \item A significant challenge in developing \( S^{cc} \) is ensuring that the resulting facets do not intersect, thereby 
  maintaining the integrity and authentic representation of the point cloud's structure.
  \item The principles involved in this process are of paramount importance in domains such as 3D reconstruction and computational 
  geometry, where precise geometric modeling is essential.
\end{itemize}

Our research endeavors to provide a comprehensive understanding of the process involved in transforming the convex hull \( S^{ch} \) 
into a concave surface \( S^{cc} \), which includes ensuring the surface remains closed throughout the transformation. 
This procedure is pivotal for fully representing a d-dimensional point cloud \( P \) and underscores the fundamental concept of 
concavity in geometric analysis.

\paragraph{}
In the context of transitioning from \( S^{ch} \) (the convex hull) to \( S^{cc} \) (the concave surface), various related 
methodologies can be employed. These approaches aim to evolve the convex hull into a more comprehensive surface that accurately 
represents the underlying structure of the point cloud. Some of these methods include:

\begin{itemize}
  \item \textbf{Alpha Shape} is a concept from computational geometry that generalizes the idea of a convex hull. 
  An alpha shape \cite{Asaeedi:2013} is 
  defined for a particular value of alpha, which determines the level of detail in the shape's boundary. 
  \begin{itemize}
    \item For a large value of alpha, the alpha shape may resemble the convex hull.
    \item As alpha decreases, the alpha shape adapts to capture more of the concavities in the data, potentially resembling the 
    process outlined in our paper where more points from the point cloud are incrementally included in the surface.
    \item With small alpha values, the alpha shape may lead to the formation of holes in the surface.
    \item The relevance to our study lies in the alpha shape's ability to represent varying degrees of concavity, making it a 
    potential model for understanding the transformation from convex to concave hulls.
  \end{itemize}

  Although Alpha Shapes serve as a potent tool for approximating the shape of a point cloud, they are not inherently crafted to 
  ensure a final surface akin to \( S^{cc} \). Specifically, Alpha Shapes may not guarantee a surface that includes all points 
  in a non-intersecting and closed manner, as is the characteristic requirement of \( S^{cc} \).

  \item \textbf{Concave Hull} is a concept that extends beyond the limits of the convex hull to include indentations that follow the 
  shape of the data more closely. 
  \begin{itemize}
    \item Unlike the convex hull, a concave hull \cite{Park:2012} can wrap around concave areas and holes in the data, much like the 
    surface \( S^{cc} \) in our paper.
    \item The concave hull is particularly relevant when the point cloud exhibits significant concavities, which the convex 
    hull fails to represent.
    \item The transformation to a concave hull can be seen as a specific instance of the process we discuss, where the surface 
    evolves to include more points from the point cloud.
  \end{itemize}
  
  The concave hull method, while designed to more accurately fit data points than a convex hull by capturing concavities, does not 
  intrinsically meet the specific objectives of our study. These objectives include ensuring that all points of the point cloud are 
  incorporated on the surface and that the surface remains non-intersecting. Additionally, adapting a concave hull algorithm to 
  construct a closed surface for an arbitrary (3D) point cloud presents challenges. It is not a straightforward task and may 
  necessitate substantial customizations to the conventional methodology.

  \item \textbf{Minimum Volume Enclosing Surface} is a concept aimed at finding the surface that encloses a set of points with the 
  minimal possible volume \cite{Moriya:2024TheLargest}. 
  \begin{itemize}
    \item This concept is relevant when the goal is to closely fit a surface around a set of points \cite{Pramila:1986}, similar 
	to the aim of our dynamic transformation process.
    \item In the context of our paper, the transformation from a convex to a concave hull can be seen as an attempt to minimize the 
    'unused' volume within the hull, albeit with considerations for the shape's concavities and complexities.
  \end{itemize}
  
  Both the Minimum Volume Enclosing Surface (MVES) method and the transformation from \( S^{ch} \) to \( S^{cc} \) aim to closely 
  fit a set of points. However, their methodologies, objectives, and the assurances they provide differ markedly. 
  The MVES method, while intended to generate a closed surface, does not inherently guarantee that the surface will include all 
  points of an arbitrary 3D point cloud, particularly in a manner that adheres to the intricate geometry of the point cloud, as 
  is characteristic of \( S^{cc} \). Additionally, ensuring that the resultant surface from the MVES method is free of 
  self-intersections or overlaps presents an extra layer of complexity.
  
\end{itemize}
  
The transformation from \( S^{ch} \) to \( S^{cc} \) highlights the limitations of convex hulls in accurately representing the 
geometric intricacies of a point cloud. 
The development of \( S^{cc} \) is particularly relevant in the context of surface reconstruction, mesh generation, and shape 
analysis, where the need to capture both the global shape and local details of a point set is paramount. Additionally, our 
approach may find applications in fields such as computer graphics, where realistic modeling of objects often requires a 
balance between convex and concave features, and in spatial data analysis, where the accurate representation of geographical 
features is crucial \cite{Yang:2010}.  
The concept of transforming a convex hull into a more detailed concave surface \cite{Alliez:2007} aligns with the broader goals 
in these fields to 
develop algorithms and methodologies that are more attuned to the underlying spatial structures of data. 
 
In the subsequent sections, we detail the mathematical methodology employed to evolve \( S^{ch} \) into \( S^{cc} \). 
This includes a conceptual proof of a theorem which guarantees the inclusion of all points \( p_i \) in the point cloud \( P \) 
on the surface \( S^{cc} \) (the \textit{Point Cloud Contraction Theorem}). 
Our approach adeptly captures concavities and intricate features, facilitating this transition and enhancing the accuracy and 
depth of the models.

Additionally, we present simulated results that illustrate the efficacy of this approach in 3D environments.

\section{Problem Definition}
We describe a theoretical surface dynamically conforming to a set of fixed points in d-dimensional space, culminating in a 
state where the surface intersects every point in the set.

Consider a set \( P = \{p_1, p_2, \ldots, p_n\} \subset \mathbb{R}^d \), representing a collection of fixed points in 
d-dimensional space. Define a surface \( S \) that initially encapsulates \( P \) in its convex hull, \( \text{conv}(P) \). 
The evolution of \( S \) is characterized by a continuous contraction process under the following conditions:

\begin{enumerate}
    \item \textbf{Initial Configuration of \( S \)}: The surface \( S \) initially coincides with the convex hull of \( P \), given by 
    \[ \text{conv}(P) = \left\{ \sum_{i=1}^{n} \lambda_i p_i \, \middle| \, p_i \in P, \lambda_i \geq 0, \sum_{i=1}^{n} \lambda_i = 1 \right\}. \]
	
    \item \textbf{Contraction Dynamics of \( S \)}:
    \begin{itemize}
        \item The surface \( S \) undergoes a continuous contraction process, denoted by \( \{S_k\}_{k=0}^\infty \), where \( S_k \) represents the state of \( S \) at iteration \( k \).
        \item This contraction process is defined as follows:
        \begin{align*}
            S_0 &= \text{conv}(P) \\
            S_k &\rightarrow S_{k+1} \quad \text{as} \quad k \rightarrow \infty
        \end{align*}
        where \( \text{conv}(P) \) is the convex hull of the point set \( P \), and \( S_k \) evolves to approach the spatial arrangement of points in \( P \).
        \item The contraction process is unrestricted, allowing \( S \) to deform continuously and take on any shape necessary to closely approach the points in \( P \).
    \end{itemize}

    \item \textbf{Facet Formation}:
    \begin{itemize}
        \item During the contraction process of \( S \), it gives rise to a set of flat facets \( F_k \) at iteration \( k \) 
		during the contraction process. Each facet is represented as a hyperplane defined by a subset of points \( C \subseteq P \), 
		where \( |C| \geq d+1 \).
        \item The geometry of these facets is dictated by the spatial arrangement of the points within \( C \). Specifically, the 
    	facet hyperplane is determined by finding the hyperplane that best fits the points in \( C \), minimizing some suitable 
    	measure of deviation from those points.
        \item These facets serve as a discretization of the surface \( S \) as it undergoes the contraction process. They provide 
    	a way to break down the continuous surface into flat regions defined by subsets of points, aiding in the analysis of the 
    	evolving surface.
    	\item As a fundamental property of the contraction process, we assume that 
		\( \forall i, j \neq i \in \mathbb{N}, \quad (F_i \cap F_j) = \emptyset \), i.e., the facets formed during the process 
		remain non-intersecting and asserting that for all natural numbers \( i \) and \( j \) where \( i \neq j \), the 
		intersection of facets \( F_i \) and \( F_j \) is an empty set, denoted by \( \emptyset \), ensuring non-intersecting 
		facets and a well-defined surface.
    \end{itemize}

    \item \textbf{Termination of Contraction}:
    \begin{itemize}
        \item The contraction terminates when the internal region bounded by \( S \) reaches minimal volume extent consistent 
		with the positions of points in \( P \).
        \item At this stage, the surface \( S \) intersects every point in \( P \), such that \( \forall p_i \in P, p_i \in S \).
    \end{itemize}

    \item \textbf{Final Structure of \( S \)}:
    \begin{itemize}
        \item The final state of \( S \) is a geometric surface that conforms precisely to the point set \( P \), representing 
		a detailed boundary that intersects every point in \( P \).
        \item The final state of \(S\) is not only a geometric surface that precisely conforms to the point set \(P\) but also 
		represents an optimal solution in terms of minimizing its areal surface while ensuring it intersects every point in \(P\)
		ensuring that the surface maintains non-cross-intersecting facets, that is:	

		\begin{align*}
		S = \argmin_{S'} \Bigg\{ &\text{Area}(S') \; \Bigg| \; \forall p_i \in P, \exists x_i \in S' \text{ closest to } p_i, \\
		&\text{and } \forall F_i, F_j \in \mathcal{F}(S'), \, i \neq j \Rightarrow F_i \cap F_j = \emptyset \\
		&\text{or } \partial F_i \cap \partial F_j \Bigg\}.
		\end{align*}

        Here, \(\text{Area}(S')\) represents the surface area of \( S' \), \(\mathcal{F}(S')\) denotes the set of facets forming the 
		surface \( S' \), and \(\partial F_i\) is the boundary of facet \( F_i \). This expression encapsulates the surface's 
		conformance to \( P \), minimal area criterion, and non-cross-intersection constraint among its facets.
    \end{itemize}
\end{enumerate}

\section{Mathematical Approach}
In the following, we aim to prove that given a set \( P = \{p_1, p_2, \ldots, p_n\} \subset \mathbb{R}^d \), 
the contraction process \( \{S_k\}_{k=0}^\infty \) ends when $\forall P_i \in P \quad d(S_{k+1}, P) < d(S_k, P)$.

\vspace{0.4cm}

\begin{theorem}[Point Cloud Contraction Theorem]\label{thm:Contraction to Point Cloud Envelope}

Let \( P = \{p_1, p_2, \ldots, p_n\} \) be a finite set of distinct points in d-dimensional space \( \mathbb{R}^d \), 
and let \( S \) be a surface that initially coincides with the convex hull of \( P \), denoted as \( \text{conv}(P) \). 
Suppose \( S \) undergoes a continuous contraction process defined by the following properties:

\begin{enumerate}
    \item \textbf{Initial Configuration}: \( S \) initially is the convex hull of \( P \), where 
    \[ \text{conv}(P) = \left\{ \sum_{i=1}^{n} \lambda_i p_i \, \middle| \, p_i \in P, \lambda_i \geq 0, \sum_{i=1}^{n} \lambda_i = 1 \right\}. \]

    \item \textbf{Contraction Dynamics}: During the contraction process, \( S \) progressively adapts to the spatial arrangement of 
	points in \( P \), without any restrictions on the form it can assume, and it may form flat facets determined by subsets of 
	points \( C \subseteq P \). That is:
	\[ S_k \xrightarrow{\text{adapt}} S_{k+1} = \bigcup_{C \subseteq P, |C| \geq d} \text{facet}(C), \quad \forall k \in \mathbb{N}, \]

    \item \textbf{Termination Condition}: The contraction terminates when the internal region bounded by \( S \) reaches a minimal 
	spatial extent consistent with the positions of points in \( P \), resulting in \( S \) intersecting every point in \( P \). 
	In other words:
    The contraction terminates when the following condition is met:
    \[
       \min_{S'} \left\{ \text{Volume}(\text{Int}(S')) \; \middle| \; P \subseteq \text{Int}(S') \subseteq \mathbb{R}^d \right\},
    \]
    such that \(\forall p_i \in P\), \(p_i \in S\).
\end{enumerate}

Then, the final structure of \( S \) after the completion of the contraction process is such that every point in \( P \) 
lies on \( S \), i.e., \( \forall p_i \in P, p_i \in S \).

\end{theorem}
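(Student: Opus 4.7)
The plan is to argue by contradiction, leveraging the termination condition (minimum enclosed volume subject to $P \subseteq \text{Int}(S')$) as the driving force. The setup I would use has three phases: containment preservation throughout the contraction, structural dichotomy for any point hypothetically left off the surface, and a local indentation construction that violates minimality.

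First, I would establish as a preliminary lemma that throughout the entire contraction process the inclusion $P \subseteq \text{Int}(S_k) \cup S_k$ holds. This is immediate at $k=0$ because $S_0 = \text{conv}(P)$ contains $P$ by definition, and it is preserved inductively because the contraction dynamics replace each $S_k$ by a surface whose facets are supported by subsets $C \subseteq P$ and whose non-intersecting facet condition forces the deformation to remain on the correct side of any point of $P$ (otherwise a point would have to cross a facet, which is impossible in a continuous contraction that preserves facet integrity).

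Second, suppose for contradiction that the terminal surface $S$ satisfies $p_j \notin S$ for some $p_j \in P$. By the preliminary lemma, $p_j \in \text{Int}(S)$, so there exists $\varepsilon > 0$ with $B(p_j, \varepsilon) \subset \text{Int}(S)$ and $B(p_j, \varepsilon) \cap P = \{p_j\}$ (possible because $P$ is finite and all points are distinct). I would then construct a competitor surface $S^\star$ by modifying $S$ only within a small neighborhood disjoint from $B(p_j,\varepsilon/2)$: replace a tiny patch of a facet of $S$ that is nearest to $p_j$ by an inward dimple whose apex lies at $p_j$, using $p_j$ together with $d$ nearby points of the patch's supporting subset $C$ to define the new flat facets. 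The dimple is local, preserves closedness, keeps $P \setminus \{p_j\}$ inside (since the modification is confined to a region containing no other points), and strictly decreases the enclosed volume.

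Third, I would verify that $S^\star$ still satisfies the non-crossing facet condition: because the perturbation is strictly local and the new facets share only boundary edges with the unperturbed facets of $S$, the intersection condition $\partial F_i \cap \partial F_j$ is respected while no interior crossings are introduced. This yields $\text{Volume}(\text{Int}(S^\star)) < \text{Volume}(\text{Int}(S))$ with $P \subseteq \text{Int}(S^\star) \cup S^\star$, contradicting the termination condition that $S$ minimizes the enclosed volume. Hence no such $p_j$ exists, and $\forall p_i \in P, \; p_i \in S$.

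The main obstacle I anticipate is the third step: rigorously constructing the local inward dimple so that (i) it remains a valid union of facets of the form $\text{facet}(C)$ with $C \subseteq P$ and $|C| \geq d$, (ii) it does not cross any neighboring facet of $S$, and (iii) it strictly reduces volume. The first requirement is delicate because the admissible facets in the contraction process are constrained to be spanned by points of $P$ itself, so the dimple must be expressible using $p_j$ and a suitable subset of the vertices of the nearby facet; handling degenerate configurations (e.g., when $p_j$ is nearly coplanar with the facet) will require care, likely via a small perturbation argument or by invoking general-position assumptions on $P$.
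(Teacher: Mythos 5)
Your route is genuinely different from the paper's: the paper proves the theorem by combining Lemma \ref{lmm:The convex hull of P, encapsulates all points in P} (initial containment) with Lemma \ref{lmm:contraction process continues until}, which defines $d(S_k,P)=\sum_{p_i\in P}\min_{x\in S_k}\|p_i-x\|^2$, observes that this sequence is decreasing and bounded below, invokes monotone convergence, and then asserts that the limit is $0$. You instead run a variational exchange argument against the minimal-volume termination condition: if some $p_j$ were left strictly inside, carve an inward dimple with apex $p_j$ to produce an admissible competitor of strictly smaller volume. In spirit this is closer to an actual proof, because the paper's key step (``in our case $L=0$'') is asserted rather than derived, whereas your argument at least identifies a mechanism that forces interior points onto the surface. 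However, your construction has a concrete gap beyond the one you flagged.

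The gap is in the containment claim for the competitor $S^\star$. The region you remove from $\mathrm{Int}(S)$ is not a small neighborhood of $p_j$: it is the entire cone with apex $p_j$ and base equal to the replaced facet patch, stretching from $p_j$ all the way out to the nearest facet. Your choice of $\varepsilon$ with $B(p_j,\varepsilon)\cap P=\{p_j\}$ controls only a ball around $p_j$ and says nothing about the rest of that cone, which may contain other points of $P$; any such point would land outside $S^\star$, so $S^\star$ would violate the constraint $P\subseteq \mathrm{Int}(S^\star)\cup S^\star$ and would not be an admissible competitor. (There is also an internal tension in the write-up: the modification cannot be ``disjoint from $B(p_j,\varepsilon/2)$'' while the dimple's apex lies at $p_j$.) The gap is repairable --- since $P$ is finite, a generic segment from $p_j$ to the facet misses $P\setminus\{p_j\}$, and a sufficiently thin cone around it does too --- but shrinking the base patch that way collides directly with the obstacle you yourself identified: the admissible facets must be of the form $\mathrm{facet}(C)$ with $C\subseteq P$, so the base vertices of a thin dimple will generally not be points of $P$. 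You need either to relax the facet-admissibility condition for the competitor class (which changes the minimization problem the termination condition refers to) or to argue that a volume-reducing competitor exists within the constrained class, and neither is done. As written, the proof does not close.
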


\vspace{0.2cm}

\begin{proof}\ref{thm:Contraction to Point Cloud Envelope}

Lemma \ref{lmm:The convex hull of P, encapsulates all points in P} ensures that all $p_i$ are initialy enclosed in $S$. 
Lemma \ref{lmm:contraction process continues until} prooves that \( \forall P_i \in P, \quad P_i \in S_m \).

\vspace{0.2cm}

Hence, following \( \{S_k\}_{k=0}^\infty \), every point in \( P \) lies on \( S_m \), i.e., \( \forall p_i \in P, p_i \in S_m \).

\end{proof}

\section{Proofs for Lemmas}

\begin{lemma}\label{lmm:The convex hull of P, encapsulates all points in P}
Let \( P = \{p_1, p_2, \ldots, p_n\} \) be a finite set of distinct points in \( \mathbb{R}^d \), and let \( S \) be the convex hull of \( P \). Then, \( S \) encapsulates all points in \( P \).
\end{lemma}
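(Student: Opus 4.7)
The plan is to argue directly from the definition of the convex hull given in the paper, since the statement is essentially a restatement of the definition. I would first recall that
\[ \mathrm{conv}(P) = \left\{ \sum_{i=1}^{n} \lambda_i p_i \;\middle|\; p_i \in P,\ \lambda_i \ge 0,\ \sum_{i=1}^{n} \lambda_i = 1 \right\}, \]
and that by hypothesis $S = \mathrm{conv}(P)$.

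Next I would fix an arbitrary index $j \in \{1, 2, \ldots, n\}$ and exhibit an explicit choice of convex coefficients realizing $p_j$: take $\lambda_j = 1$ and $\lambda_i = 0$ for every $i \neq j$. This tuple clearly satisfies $\lambda_i \ge 0$ and $\sum_i \lambda_i = 1$, and the associated convex combination equals $\sum_i \lambda_i p_i = p_j$. Hence $p_j \in \mathrm{conv}(P) = S$. Since $j$ was arbitrary, I conclude $p_i \in S$ for all $i$, which is precisely the claim that $S$ encapsulates every point of $P$.

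I expect no real obstacle here; the lemma is a direct unpacking of the definition of convex hull. The only point deserving a brief remark is clarifying the meaning of \emph{encapsulates}: in this paper $S$ is treated both as the boundary surface and as the closed region it bounds, and in either reading the conclusion $p_i \in S$ follows, because the extreme points of $\mathrm{conv}(P)$ (which include, in particular, each $p_i$ that is not expressible as a nontrivial convex combination of the others) lie on the boundary, while the remaining $p_i$ lie in the interior — either way, they belong to $S$. I would close with a one-sentence note that this lemma supplies the base case $k = 0$ for the contraction process $\{S_k\}$ used in Theorem~\ref{thm:Contraction to Point Cloud Envelope}, so every subsequent iterate begins from a configuration known to enclose $P$.
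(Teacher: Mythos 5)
Your proof is correct and follows essentially the same route as the paper's: both recall the definition of \( \mathrm{conv}(P) \) and exhibit \( p_j \) as the trivial convex combination with \( \lambda_j = 1 \) and \( \lambda_i = 0 \) for \( i \neq j \). Your added remarks on the meaning of \emph{encapsulates} and on the base case for the contraction process are reasonable supplements but do not change the substance of the argument.
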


\begin{proof}\ref{lmm:The convex hull of P, encapsulates all points in P}
Consider \( S \) defined as the convex hull of \( P \), denoted \( \text{conv}(P) \). By definition, \( \text{conv}(P) \) is the set comprising all convex combinations of points in \( P \), i.e., 
\[ \text{conv}(P) = \left\{ \sum_{i=1}^{n} \lambda_i p_i \, \middle| \, p_i \in P, \lambda_i \geq 0, \sum_{i=1}^{n} \lambda_i = 1 \right\}. \]

To establish that \( S \) encapsulates every point in \( P \), it suffices to demonstrate that for any \( p_j \in P \), \( p_j \) is an element of \( S \). Consider any \( p_j \in P \). The point \( p_j \) can be expressed as a convex combination of points in \( P \) where \(\lambda_j = 1\) and \(\lambda_i = 0\) for all \( i \neq j \). This particular combination is trivially a member of \( \text{conv}(P) \), thus ensuring that \( p_j \in S \).

Furthermore, the set \( S \) being a convex hull, is convex by nature. This implies that for any two points \( x, y \in S \), the line segment joining \( x \) and \( y \) lies entirely within \( S \). Therefore, \( S \) not only contains the points of \( P \) but also all line segments joining these points, reinforcing the encapsulation of \( P \) within \( S \).

\vspace{0.2cm}

Hence, we conclude that the convex hull \( S \) of the set \( P \) encapsulates all points in \( P \).

\end{proof}

\begin{lemma}\label{lmm:contraction process continues until}

   Given a set \( P = \{P_1, P_2, \ldots, P_n\} \subseteq \mathbb{R}^d \) and a surface \( S \), consider a contraction 
   process \( \{S_k\}_{k=0}^\infty \) such that:
   \begin{itemize}
       \item For each \( k \), \( S_k \rightarrow S_{k+1} \) implies \( d(S_{k+1}, P) < d(S_k, P) \), where \( d \) denotes a 
	   suitable metric.
       \item Facets are formed for \( C \subseteq P \), \( S|_C \) planar, aligning with \( C \).
       \item The process terminates at \( m \) if \( \forall P_i \in P, P_i \in S_m \) and \( S_m \) is minimal.
       \item If \( \exists P_i \in P \) with \( P_i \notin S_m \), extend to \( m' \) where \( P_i \in S_{m'} \).
   \end{itemize}

\end{lemma}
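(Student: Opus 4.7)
The plan is to combine the initial containment guaranteed by Lemma~\ref{lmm:The convex hull of P, encapsulates all points in P} with the monotone contraction hypothesis to identify a well-defined terminal index, and then to handle any residual uncaptured point via the extension clause. First I would invoke Lemma~\ref{lmm:The convex hull of P, encapsulates all points in P} to conclude that $P \subseteq S_0 = \text{conv}(P)$, so the quantity $d(S_k, P)$ is well-defined and finite at $k=0$. Next I would exploit the strict decrease $d(S_{k+1}, P) < d(S_k, P)$ together with the lower bound $d(S_k, P) \geq 0$: the sequence $\{d(S_k, P)\}_{k=0}^\infty$ is monotone and bounded below, hence converges to some limit $L^\star \geq 0$ by the monotone convergence theorem.

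The core step is to show that $L^\star = 0$, which corresponds to every $P_i \in P$ lying on the terminal surface. I would use the facet-formation rule: whenever $S_k$ reaches a subset $C \subseteq P$ of cardinality at least $d$, the restriction $S|_C$ becomes a planar facet aligned with $C$, so the points in $C$ are anchored on $S_k$ for every subsequent $k$. Combined with the termination criterion, which selects the minimal surface consistent with enclosing $P$, this anchoring forces a finite index $m$ at which no further strict decrease in $d(S_m, P)$ is possible without violating minimality or the non-intersection condition. Consequently $d(S_m, P) = 0$, which yields $P \subseteq S_m$.

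For the residual case where some $P_j \in P$ fails to lie on $S_m$ at the nominal termination step, I would invoke the extension clause directly: by hypothesis there exists $m' > m$ with $P_j \in S_{m'}$. Since $|P| = n$ is finite, only finitely many such extensions are required, so iterating this procedure over each uncaptured point yields an index at which all of $P$ is incorporated into the surface.

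The hard part will be making the anchoring argument precise enough to rule out pathological contractions that pass \emph{through} a point $P_j$ rather than capturing it. The lemma's hypotheses on the metric $d$, the contraction dynamics, and the notion of minimality are stated informally, so one must interpret them jointly with the non-intersection requirement from the problem setup (namely $F_i \cap F_j = \emptyset$ for $i \neq j$) in order to guarantee that once the contracting surface comes into contact with a point of $P$, that point becomes a vertex of some facet and remains on every subsequent $S_k$. Absent this step, the convergence $L^\star = 0$ would not by itself imply pointwise incidence, only asymptotic approach.
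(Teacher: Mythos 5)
Your proposal follows essentially the same route as the paper's own proof: define $d(S_k,P)$ as a sum of squared point-to-surface distances, observe that the strictly decreasing, nonnegative sequence converges by the monotone convergence theorem, argue the limit is zero so that every point lands on the terminal surface, and appeal to the facet-formation and non-intersection rules to keep captured points anchored. The paper adds a Hausdorff-distance restatement of termination and an explicit condition $F_i \cap F_{k+1} \subseteq \mathcal{H}(C \cap C_i)$ for non-intersection, but these are cosmetic relative to your outline.

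The gap you flag at the end is genuine, and you should know that the paper does not close it either. Monotone decrease plus the bound $d(S_k,P) \geq 0$ yields convergence to some $L^\star \geq 0$, but nothing in the stated hypotheses forces $L^\star = 0$: a sequence such as $d(S_k,P) = 1 + 1/k$ satisfies every listed condition and converges to $1$. The paper's proof simply asserts ``In our case, $L = 0$ as the surface $S$ must intersect all points in $P$ in the limit,'' which assumes the conclusion. Your proposed fix --- that once the contracting surface reaches a subset $C \subseteq P$ a facet is formed and those points are permanently anchored, so the process cannot stall at a positive distance --- is the right idea for repairing the argument, but as you acknowledge it is not made precise: one would need to show that from any configuration with $d(S_k,P) > 0$ there exists an admissible step achieving a decrease bounded away from zero (or at least that the infimum over admissible continuations is zero), and that the non-intersection constraint never obstructs such a step. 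Neither you nor the paper establishes this. Your observation that finiteness of $P$ bounds the number of extension steps is a useful addition that the paper omits, but it presupposes the extension clause can always be satisfied, which again rests on the unproven anchoring claim. In short: same approach, same hole; you are more honest about where the hole is.
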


\begin{proof}\ref{lmm:contraction process continues until}

\begin{enumerate}

\item \textit{Contraction Process and Metric Definition}: 
    Define the distance metric \( d(S, P) \) as a sum of the squared Euclidean distances from each point in \( P \) to the nearest 
	point on the surface \( S \):
    \[ d(S, P) = \sum_{p_i \in P} \min_{x \in S} \|p_i - x\|^2. \]
    For each iteration \( k \), as \( S_k \) transitions to \( S_{k+1} \), we ensure that:
    \[ d(S_{k+1}, P) < d(S_k, P). \]
    This implies a monotonically decreasing sequence of distances, indicating progressive convergence of the surface towards the 
	points in \( P \).

\item \textit{Convergence Analysis:} 
    To prove convergence, we need to show that as \( k \rightarrow \infty \), \( d(S_k, P) \rightarrow 0 \), implying that every 
	point in \( P \) lies on \( S \) in the limit.
    Since \( d(S_{k+1}, P) < d(S_k, P) \) and \( d(S, P) \geq 0 \) for all \( S \) and \( P \), we have a bounded, monotonically 
	decreasing sequence. By the Monotone Convergence Theorem \cite{rudin:1976, Ugwunnadi:2019}, this sequence converges.
    Let \( \epsilon > 0 \) be given. Since \( \{d(S_k, P)\} \) is convergent, there exists an \( N \) such that for 
	all \( k \geq N \), \( |d(S_k, P) - L| < \epsilon \), where \( L \) is the limit of \( \{d(S_k, P)\} \). 
	In our case, \( L = 0 \) as the surface \( S \) must intersect all points in \( P \) in the limit.

\item \textit{Topological Closure and Hausdorff Distance:} 
    Consider the topological closure of \( S \), denoted \( \text{Cl}(S) \), and the Hausdorff distance \( d_H \). The process 
	terminates at \( m \) when:
    \[ \forall p_i \in P, p_i \in \text{Cl}(S_m) \quad \text{and} \quad d_H(S_m, P) \leq d_H(S_k, P) \, \forall k < m. \]
    The Hausdorff distance \( d_H \) between \( S \) and \( P \) will also approach 0 as \( k \rightarrow \infty \), reinforcing 
	the convergence of \( S \) to a state where it intersects all points in \( P \).

\item \textit{Facet Formation and Non-Intersecting Surface}:
	Let \( F_{k+1} \) be a newly formed facet during the transition from \( S_k \) to \( S_{k+1} \). Define \( \mathcal{H}(C) \) as 
	the hyperplane formed by \( C \subseteq P \). The facet formation follows:
	\[ F_{k+1} = \mathcal{H}(C) \quad \text{with} \quad C \subseteq P, |C| \geq d+1. \]
	To ensure non-intersection, we impose:
	\[ \forall F_i \in S_k, F_i \cap F_{k+1} \subseteq \mathcal{H}(C \cap C_i) \]
	where \( C_i \) are the points defining \( F_i \). This condition ensures that any intersection between \( F_i \) and \( F_{k+1} \) 
	occurs only along shared vertices or edges, preventing interior intersections and preserving the integrity of \( S \).
\end{enumerate}

\vspace{0.2cm}

Thus, the contraction process \( \{S_k\}_{k=0}^\infty \) results in a final structure \( S_m \) where every point in \( P \) 
lies on \( S_m \), and \( S_m \) represents the minimal surface enclosing \( P \) under the defined metric and further
shows that the process will eventually terminate at a state where the surface \( S \) is minimal and includes every point in \( P \).
 
\end{proof}

\section{Methodology}

The initial step in constructing a surface encompassing an $n$-point 3D point cloud involves the computation of the Convex Hull (CH). 
This entails identifying the vertex set $P^{CH} = \{p_1, p_2, \ldots, p_m\}$ of the CH and defining the corresponding facets 
to form the CH's closed surface topology. Let $\mathcal{F} = \{F_1, F_2, \ldots, F_l\}$ represent the set of facets of the CH, 
where each facet $F_j$ is a planar subset bounded by a subset of vertices in $P^{CH}$.

For the points $P_i$ not in $P^{CH}$, we define their Euclidean distance to the nearest facet $F \in \mathcal{F}$ as 
\[
d(P_i, F) = \min_{F \in \mathcal{F}} \left( \sqrt{\sum_{k=1}^{3} (x_{i,k} - x_{F,k})^2} \right),
\]
where $(x_{i,1}, x_{i,2}, x_{i,3})$ and $(x_{F,1}, x_{F,2}, x_{F,3})$ are the coordinates of point $P_i$ and the centroid of 
facet $F$, respectively.

The points $P_i$ are prioritized based on $d(P_i, F)$ in ascending order. For each selected point $P_i$, the nearest 
facet $F_k \in \mathcal{F}$ is identified and replaced with a set of three new facets $\{F_{k1}, F_{k2}, F_{k3}\}$, thereby 
forming a tetrahedron with $P_i$ as a common vertex. This modification expands the surface to encapsulate $P_i$, and $\mathcal{F}$ 
is updated to include these new facets.

To maintain the surface's integrity, points sharing a facet with a previously processed point are omitted in subsequent iterations 
to avoid the incorrect generation of new facets. This iterative procedure is repeated until all points in the point cloud are 
encompassed by the evolving surface.

Cross-intersections between facets are inherently avoided in the proposed method by leveraging geometric and topological constraints. 
Specifically, new facets are constructed by integrating a non-planar point cloud into the existing surface structure while 
adhering to minimal Hausdorff distance principles. The process can be mathematically described as follows:

\begin{enumerate}
    \item \textbf{Point Cloud Integration and Facet Formation:} \\
    Let \( P_{np} \) be a non-planar point cloud and \( F \) be the set of existing facets. For each point \( p \in P_{np} \), 
	we identify the nearest facet \( F_{\text{near}} \in F \) based on the Hausdorff distance, defined as:
    \[ d_H(p, F_{\text{near}}) = \min_{q \in F_{\text{near}}} \| p - q \|, \]
    where \( \| p - q \| \) denotes the Euclidean distance between points \( p \) and \( q \).

    \item \textbf{Minimizing Cross-Intersection:} \\
    To avoid cross-intersections, a new facet \( F_{\text{new}} \) is formed by extending \( F_{\text{near}} \) to include \( p \), 
	ensuring that \( F_{\text{new}} \) is the closest possible planar structure to \( p \) without intersecting other facets. 
	This can be expressed as:
    \[ F_{\text{new}} = \text{extend}(F_{\text{near}}, p) \]
    subject to:
    \[ \forall F_i \in F, F_i \cap F_{\text{new}} \subseteq \partial F_i \cup \partial F_{\text{new}}, \]
    where \( \partial F \) denotes the boundary of facet \( F \), and the intersection is restricted to the boundaries of the facets.

    \item \textbf{Geometric Optimization:} \\
    The extension of \( F_{\text{near}} \) to form \( F_{\text{new}} \) is optimized to ensure minimal deviation from the planar 
	structure and adherence to the surface integrity. This can be formulated as a constrained optimization problem:
    \[ \min_{F_{\text{new}}} \int_{F_{\text{new}}} d(x, P_{np}) \, dx, \]
    subject to:
    \[ F_{\text{new}} \cap F_i \subseteq \partial F_i \cup \partial F_{\text{new}}, \, \forall F_i \in F, \]
    where \( d(x, P_{np}) \) measures the point-wise distance from \( x \in F_{\text{new}} \) to the nearest point in \( P_{np} \).

    \item \textbf{Topological Consistency:} \\
    In addition to geometric considerations, topological consistency is maintained by ensuring that the addition 
	of \( F_{\text{new}} \) does not create non-manifold edges or vertices, preserving the surface topology.
\end{enumerate}

By integrating these principles, the method systematically avoids the creation of cross-intersections between facets. The approach 
not only maintains geometric fidelity to the original point cloud but also ensures topological soundness of the evolving surface.

Given the algorithmic complexity and computational intensity of this methodology, optimization and efficiency are crucial, 
especially for large-scale point clouds. Selecting appropriate data structures for storing points and facets, along with 
efficient algorithms for distance computation and facet replacement, are essential for the performance and accuracy of the 
surface envelopment process.

\subsection{Complexity Analysis}

The above algorithm's complexity can be analyzed by examining its major components: computation of the Convex Hull (CH), distance 
calculations, prioritization and processing of points, and updating the surface. 

\begin{enumerate}
    \item \textbf{Computation of the Convex Hull (CH):} The complexity for computing the CH in a 3D space is $O(n \log n)$, 
	where $n$ is the number of points in the 3D point cloud.

    \item \textbf{Distance Calculations:} For each of the $n - m$ points not in $P^{CH}$ (where $m$ is the number of 
	points in $P^{CH}$), the algorithm computes the distance to the nearest facet. Assuming the number of facets in 
	the CH is $O(m)$, the distance calculation for each point would be $O(m)$, leading to a total complexity of $O((n-m)m)$ 
	for this step.

    \item \textbf{Prioritization and Processing of Points:} The points are sorted based on their distance from the CH, which 
	has a complexity of $O((n-m) \log (n-m))$. Each point is then processed to update the facets. If we assume the average 
	number of operations to update the facets per point is constant, the complexity for processing all points would be $O(n-m)$.

    \item \textbf{Updating the Surface:} The update process involves replacing a facet with three new facets and updating the 
	data structure. Assuming constant time for each update, the complexity is again proportional to the number of points 
	being processed, i.e., $O(n-m)$.
\end{enumerate}

Combining these components, the overall complexity of the algorithm is dominated by the distance calculations and the sorting of 
points. Therefore, the total complexity is approximately:
\[
O(n \log n) + O((n-m)m) + O((n-m) \log (n-m)) + O(n-m)
\]

In the worst-case scenario (where $m$ is close to $n$), the complexity can approach $O(n^2)$. However, in practical scenarios 
where $m$ is much smaller than $n$, the $O((n-m)m)$ term may not dominate. In such cases, the complexity is more influenced by 
the $O(n \log n)$ term from the CH computation and the $O((n-m) \log (n-m))$ term from sorting the points.

This analysis, and the actual complexity can vary depending on the specifics of the implementation, 
such as the data structures used and the efficiency of the facet update process.

\section{Results}
We present the evolution of a surface, starting from an initial spatial point distribution cloud, to a final form 
toward a closed surface that encapsulates all points of the cloud in 3D environment. 
The process of contracting an initial Convex Hull surface (see figure \ref{Clouds and CH figure}) to its final all-points 
surface is demonstrated in the following. In the graphical representations, red points are used to denote those that are 
positioned on the surface.

\begin{figure}[H]
\begin{center}
\includegraphics[width=7cm]{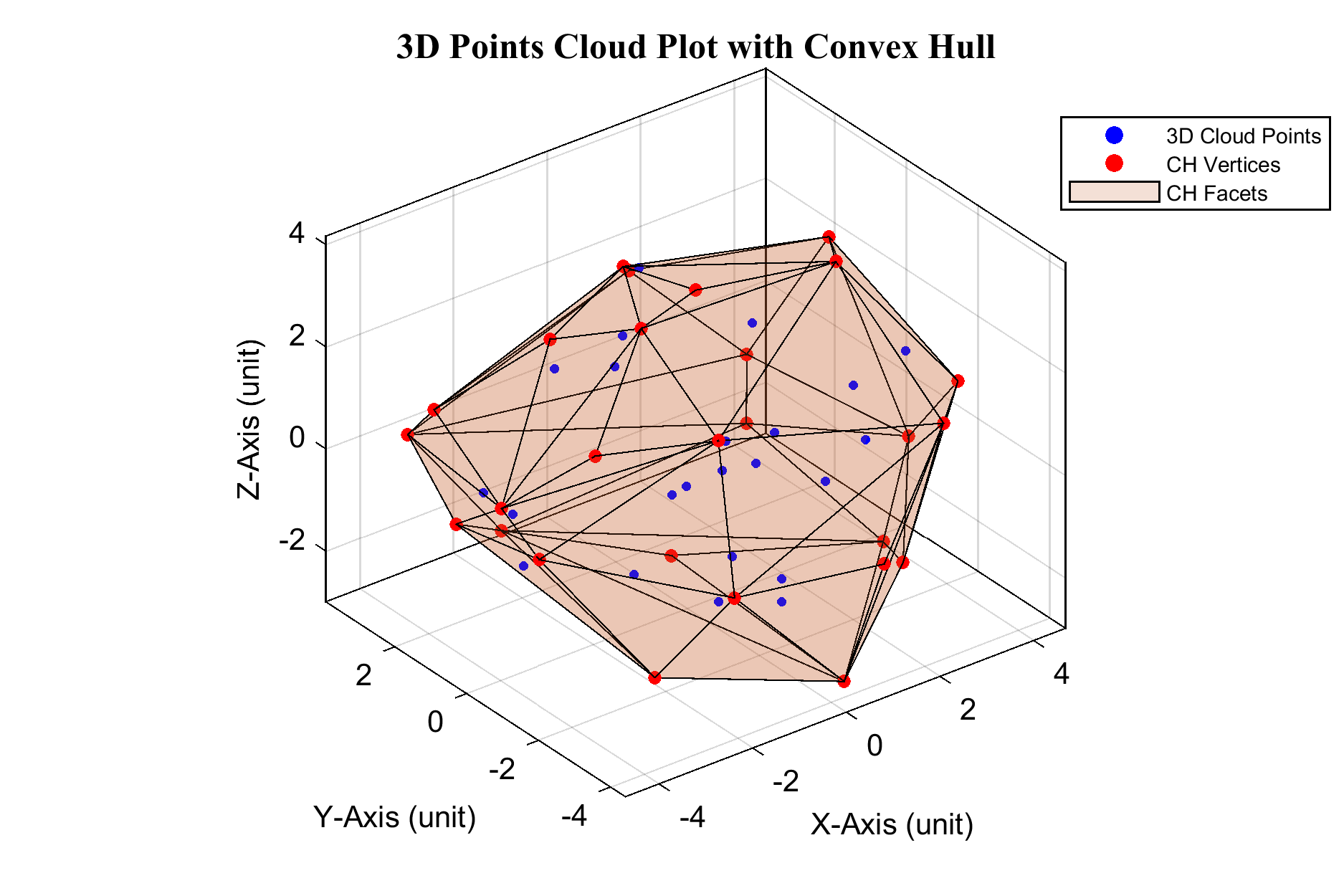}
\includegraphics[width=7cm]{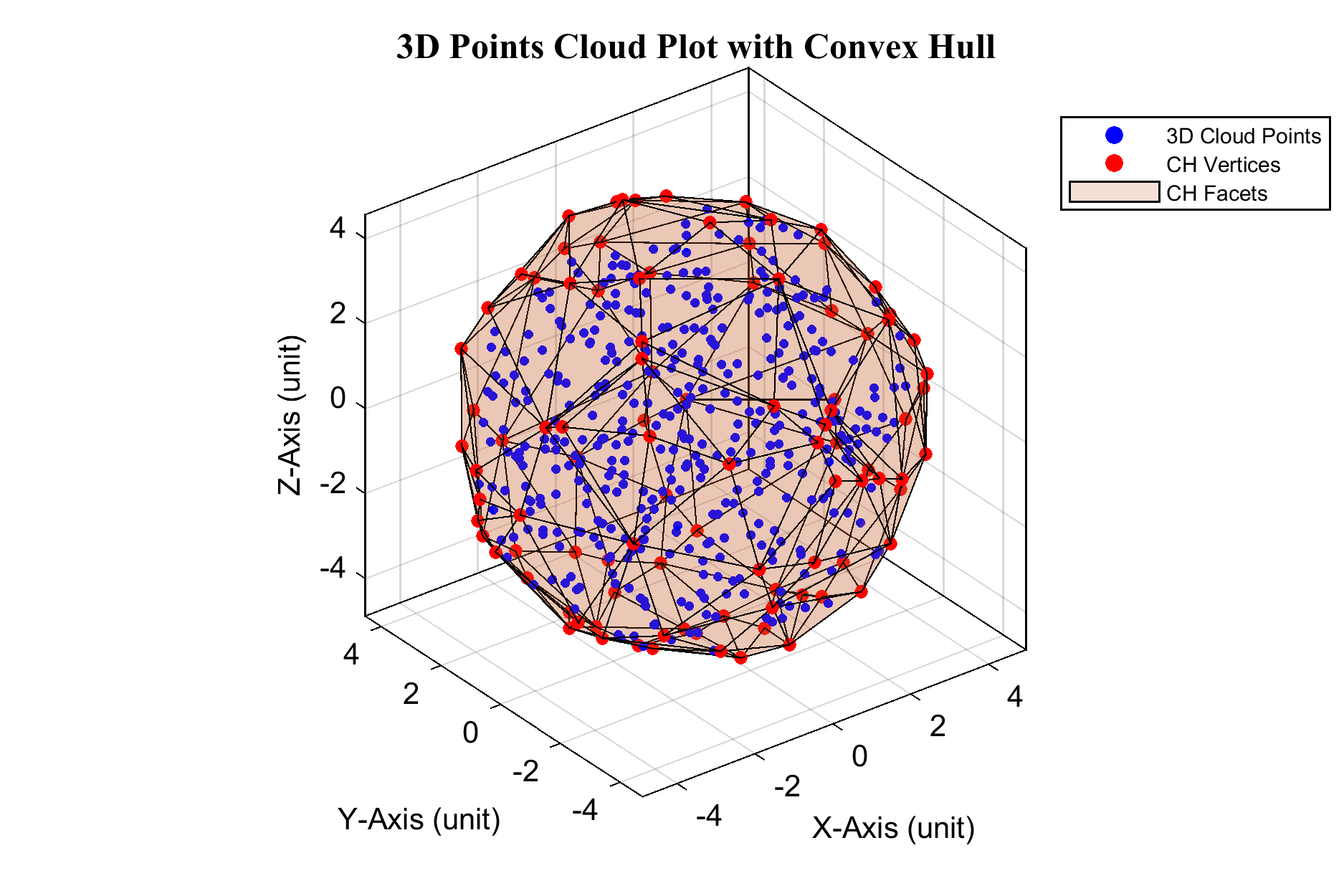}
\caption{The 3D point distributions of a 50-point (left) and a 500-point clouds sphered shape (right). Red points refer to 
on-surface points.}
\label{Clouds and CH figure}
\end{center}
\end{figure}

The process of replacing a facet and an external corresponding point with three new facets is schematically shown 
in Figure \ref{Facets figure}. Here the facet $F_{ABC} \in \mathcal{F}$ is replaced by three new facets 
$\{F_{ABP}, F_{BCP} anf F_{CAP}\}$.

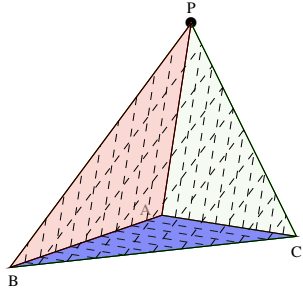
\begin{figure}[H]
\centering
\tdplotsetmaincoords{60}{120} 
\begin{tikzpicture}[scale=4,tdplot_main_coords]
    \coordinate (A) at (0,0,-0.1);
    \coordinate (B) at (1,0,0.2);
    \coordinate (C) at (0.5,0.8,0.3);

    \coordinate (P) at (0.5,0.4,1);

    \draw[fill=blue!50] (A) -- (B) -- (C) -- cycle;

    \foreach \i in {0.1,0.2,...,0.9}{
        \draw[black, dashed, very thin] ($(A)!\i!(B)$) -- ($(C)!\i!(B)$);
    }
	
    \node at (P) {\textbullet};
    \node[above, font=\tiny] at (P) {P};

    \node[left, font=\tiny, yshift=0.4ex] at (A) {A};
    \node[right, font=\tiny, xshift=-0.9ex, yshift=-1.0ex] at (B) {B}; 
    \node[below, font=\tiny] at (C) {C};

    \draw[fill=brown!30, draw=brown, fill opacity=0.1] (A) -- (C) -- (P) -- cycle;
    \draw[fill=green!30, draw=green, fill opacity=0.1] (B) -- (C) -- (P) -- cycle;
    \draw[fill=red!30, draw=red, fill opacity=0.5] (A) -- (B) -- (P) -- cycle;

    \foreach \i in {0.1,0.2,...,0.9}{
        \draw[black, dashed, very thin] ($(A)!\i!(C)$) -- ($(P)!\i!(C)$);
    }
    \foreach \i in {0.1,0.2,...,0.9}{
        \draw[black, dashed, very thin] ($(B)!\i!(C)$) -- ($(P)!\i!(C)$);
    }
    \foreach \i in {0.1,0.2,...,0.9}{
        \draw[black, dashed, very thin] ($(A)!\i!(B)$) -- ($(P)!\i!(B)$);
    }
	
    \draw (A) -- (B);
    \draw (B) -- (C);
    \draw (C) -- (A);
    \draw (A) -- (P);
    \draw (B) -- (P);
    \draw (C) -- (P);
	
\end{tikzpicture}
\caption{Replacement of a 3-point facet $F_{ABC}$ and an external point $P$ into the tetrahedron
represented by $\{F_{ABP}$, $F_{BCP}$ and $F_{CAP}\}$}.
\label{Facets figure}
\end{figure}

The final surfaces, resulting from the contraction process detailed earlier, are depicted in Figure \ref{Final Surfaces figure}. 
These surfaces, while intricate and challenging to visualize, successfully encapsulate all points without any intersecting facets.

\begin{figure}[H]
\begin{center}
\includegraphics[width=7cm]{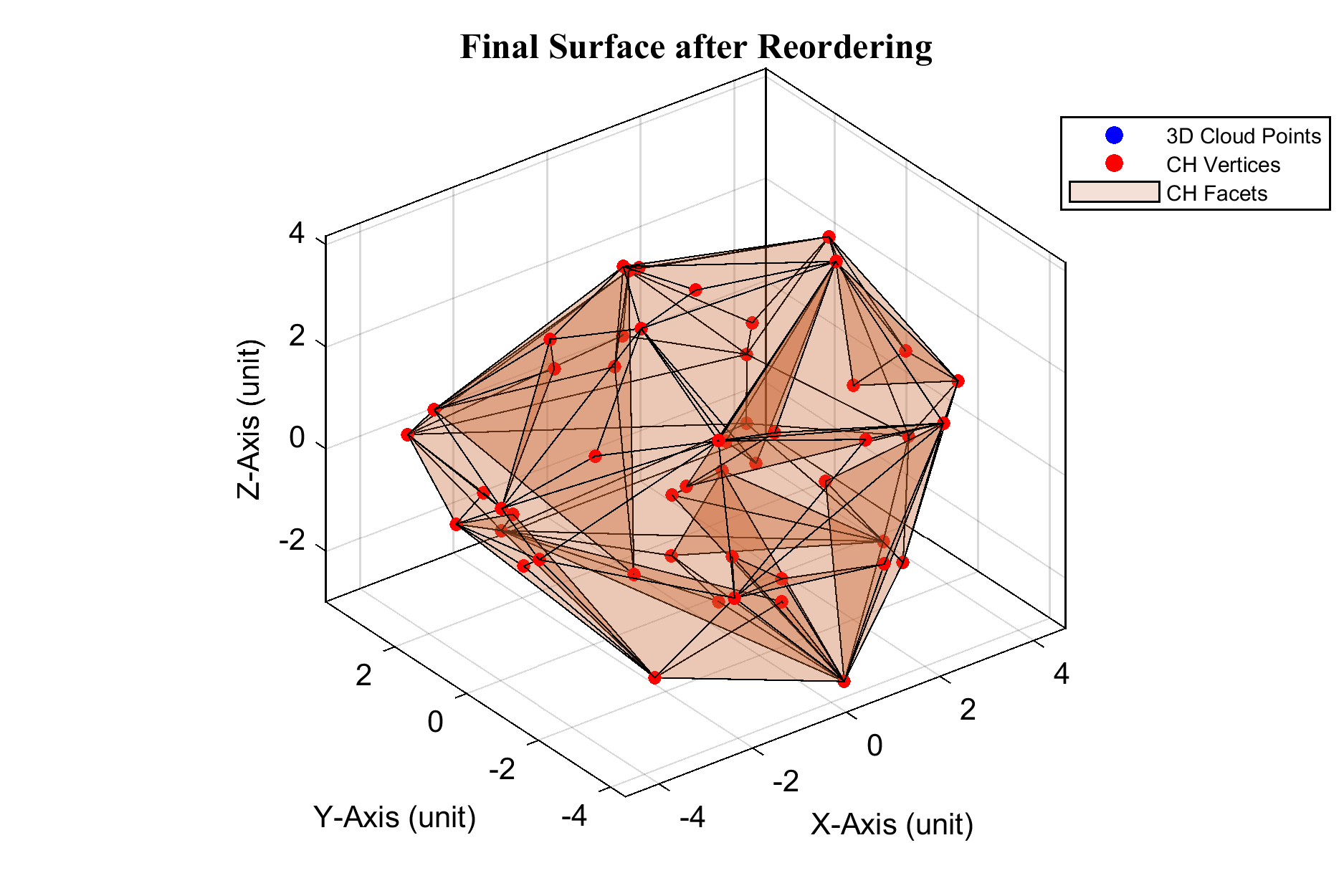}
\includegraphics[width=7cm]{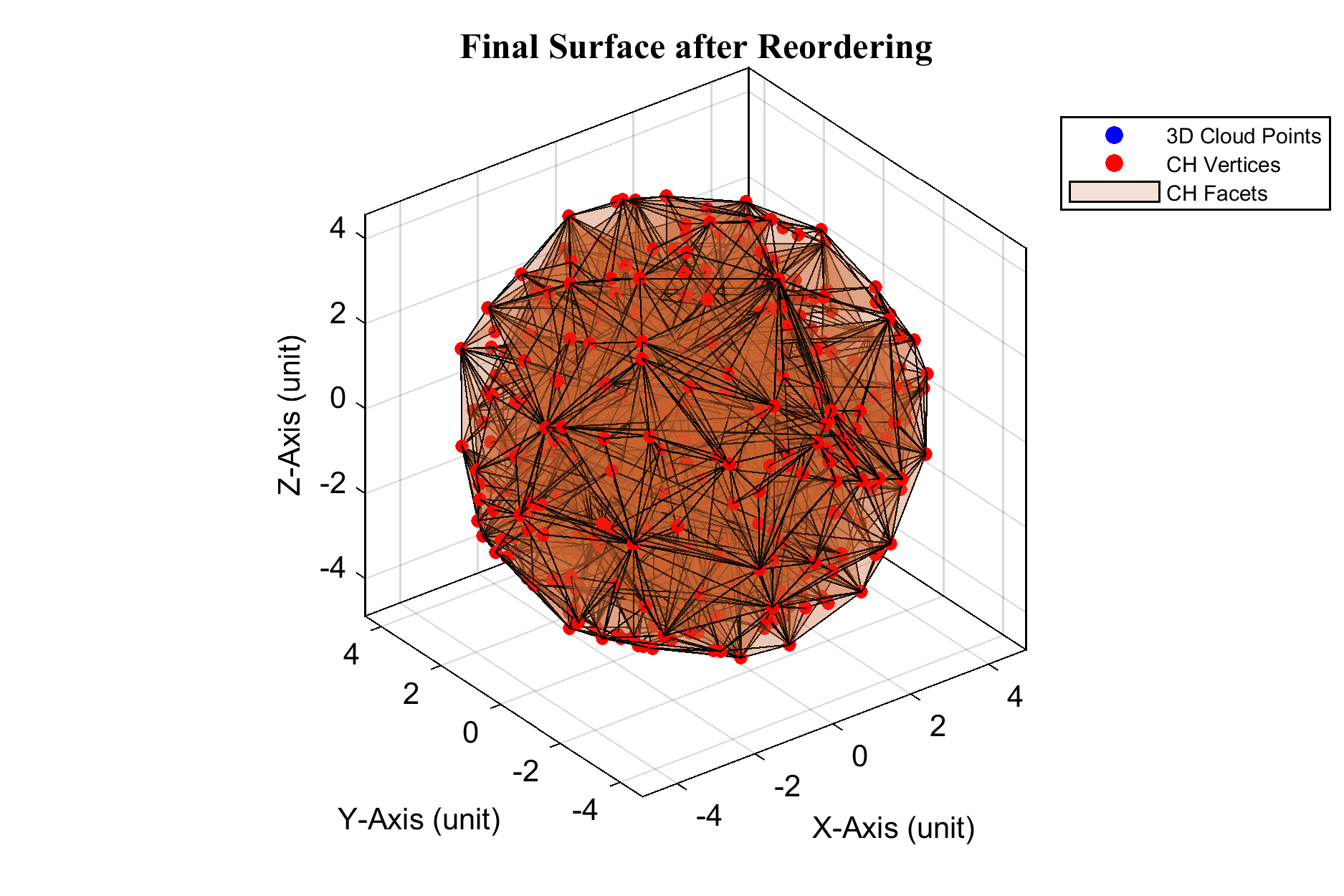}
\caption{The final surfaces encapsulating all 50 points (left) and 500 points (right). Red points refer to on-surface points.}
\label{Final Surfaces figure}
\end{center}
\end{figure}

Several key observations are to be considered:

\begin{itemize}
    \item \textbf{Comprehensive Inclusion of Points:} In all simulated scenarios, we observed that the entire set of 
	points \( P_i \in P \) were consistently and accurately enclosed within the final surface. This indicates the robustness 
	of our method in encompassing every element of the point cloud, regardless of its spatial distribution, in accordace with 
	Theorem \ref{thm:Contraction to Point Cloud Envelope}.

    \item \textbf{Absence of Cross-Intersection:} Throughout the simulations, no intersecting facets were identified, affirming 
	both the closure and integrity of the final surfaces. This outcome highlights the effectiveness of the geometric algorithm in 
	maintaining topological soundness, ensuring that the resulting surface is both continuous and non-self-intersecting.

    \item \textbf{Process Convergence:} Empirical analysis revealed that for a typical three-dimensional point cloud consisting 
	of \( n \) points, approximately \( \frac{n}{100} \) iterations are required to ensure complete enclosure of all points within 
	the final surface. This rate of convergence demonstrates the efficiency of our method, particularly in its ability to rapidly 
	adapt to the spatial characteristics of the point cloud.
	
    \item \textbf{Computational Complexity:} A detailed preliminary analysis was conducted to estimate the computational complexity 
	of the algorithm. In the worst-case scenarios, characterized by highly dense and irregular point clouds, the complexity is 
	estimated to be \( O(n^2) \), where \( n \) represents the number of points in the cloud. This quadratic complexity arises 
	from pairwise comparisons between points during the facet formation process. On the other hand, in more practical and common 
	cases, especially when the point cloud exhibits certain spatial regularities or sparseness, the complexity is notably reduced. 
	It is approximated to be \( O((n-m) \log (n-m)) \), where \( m \) signifies the number of points already processed or enclosed 
	by the surface. This logarithmic factor indicates efficiency gains through optimizations such as spatial indexing or efficient 
	nearest-neighbor search algorithms.
	
\end{itemize}

\section*{Conclusion}

In this study, we have investigated the transformation of a convex hull (\( S^{ch} \)) derived from a d-dimensional point cloud 
into a concave surface (\( S^{cc} \)). Our primary objective was to encapsulate all points of the point cloud within a closed 
concave surface, while retaining the surface's non-intersecting property. This transformation is crucial for a more accurate 
representation of the underlying geometry of the point cloud, especially when dealing with complex structures exhibiting concave 
features.

Our results demonstrate that the iterative process of transforming \( S^{ch} \) into \( S^{cc} \) is effective in encompassing 
all points in the point cloud. The iterative procedure, consisting of facet replacement and expansion, ensured that each point 
of the point cloud was eventually included on the surface. We observed that in typical three-dimensional point clouds, a 
relatively small number of iterations (approximately \( \frac{n}{100} \)) were sufficient to achieve complete enclosure of all points.
 
As part of the study, we have formulated and rigorously proven the \textit{Point Cloud Contraction Theorem}. This theorem offers a 
theoretical foundation for the transformation process of the convex hull into a minimal-area, closed concave surface, while ensuring 
the inclusion of all points from the initial spatial cloud.
The theorem articulates and mathematically validates the premise that the iterative approach will unfailingly yield a surface 
that comprehensively encompasses all points within the point cloud. By providing this proof, we reinforce the reliability 
of the transformation process. 

Through empirical analysis, it was observed that the final surface \( S^{cc} \) successfully represented the intricate geometry of 
the point cloud without any facet intersections. This aspect underlines the robustness of the method in maintaining the topological 
integrity of the surface, which is particularly pertinent in applications like 3D modeling and spatial analysis.

The computational complexity of the approach was found to vary based on the point cloud's characteristics. In the most complex 
cases, the complexity reached \( O(n^2) \), while in more typical scenarios, it was significantly lower, approximating 
\( O((n-m) \log (n-m)) \). This variation highlights the need for efficient computational strategies, especially in handling larger 
and more complex point clouds.

We note that the theoretical approach outlined in this study is versatile and can be adapted to higher-dimensional spaces as well 
as to planar scenarios.

In summary, our approach presents a methodical way to transition from a convex hull to a concave surface, capturing the geometric 
intricacies of d-dimensional point clouds. While our method is efficient and effective in certain scenarios, its performance is 
subject to the distribution and density of the point cloud. The study contributes to the broader understanding of geometric 
transformations in computational geometry and offers a basis for future research in surface reconstruction and analysis, 
especially in fields requiring accurate geometric representations of complex structures.

\section*{Declarations}
All data-related information and coding scripts discussed in the results section are available from the 
corresponding author upon request.

\renewcommand{\bibname}{References}

\begin{thebibliography}{99}

\bibitem[Loffler and van Kreveld(2010)]{Loffler:2010}
M. Löffler and M. van Kreveld.
\newblock Largest and Smallest Convex Hulls for Imprecise Points.
\newblock \emph{Algorithmica}, 56(2):235--269, 2010.
\newblock \url{https://doi.org/10.1007/s00453-008-9174-2}
\newblock DOI: 10.1007/s00453-008-9174-2.

\bibitem[Moriya(2023)]{Moriya:2023smallest}
Netzer Moriya.
\newblock Smallest Enclosing Sphere in 3D -- Particle Swarm Optimization Approach.
\newblock arXiv:2311.03843 [math.OC], 2023.
\newblock \url{https://doi.org/10.48550/arXiv.2311.03843}

\bibitem[Karzanov(2005)]{Karzanov:2005}
Alexander V. Karzanov.
\newblock Concave cocirculations in a triangular grid.
\newblock \emph{Linear Algebra and its Applications}, 400:67--89, 2005.
\newblock \url{https://doi.org/10.1016/j.laa.2004.11.003}

\bibitem[Asaeedi et al.(2013)]{Asaeedi:2013}
Saeed Asaeedi, Farzad Didehvar, and Ali Mohades.
\newblock Alpha Convex Hull, a Generalization of Convex Hull.
\newblock \emph{ArXiv}, abs/1309.7829, 2013.
\newblock Available at \url{https://api.semanticscholar.org/CorpusID:36495981}

\bibitem[Park and Oh(2012)]{Park:2012}
Jin-Seo Park and Se-Jong Oh.
\newblock A New Concave Hull Algorithm and Concaveness Measure for n-dimensional Datasets.
\newblock \emph{J. Inf. Sci. Eng.}, 29:379--392, 2012.
\newblock Available at \url{https://api.semanticscholar.org/CorpusID:10272845}

\bibitem{Moriya:2024TheLargest}
Netzer Moriya.
\newblock The Largest Empty Sphere Problem in 3D Hollowed Point Clouds.
\newblock \emph{arXiv:2401.07593 [math.OC]}, 2024.
\newblock DOI: \url{https://doi.org/10.48550/arXiv.2401.07593}.

\bibitem[Pramila and Virtanen(1986)]{Pramila:1986}
Antti Pramila and Simo Virtanen.
\newblock Surfaces of minimum area by FEM.
\newblock \emph{International Journal for Numerical Methods in Engineering}, 23:1669--1677, 1986.
\newblock Available at \url{https://api.semanticscholar.org/CorpusID:122483925}

\bibitem{Yang:2010}
Yi Yang, Mengyin Fu, Wei Wang, Xin-miao Yang, and Hao Zhu.
\newblock 3D laser point cloud-based navigation in complex environment.
\newblock In \emph{Proceedings of the 29th Chinese Control Conference}, pages 3798--3803, 2010.
\newblock Available at \url{https://api.semanticscholar.org/CorpusID:13279792}.

\bibitem[Alliez et al.(2007)]{Alliez:2007}
Pierre Alliez, David Cohen-Steiner, Yiying Tong, and Mathieu Desbrun.
\newblock Voronoi-based variational reconstruction of unoriented point sets.
\newblock In \emph{Symposium on Geometry processing}, vol. 7, pp. 39--48, 2007.

\bibitem{rudin:1976}
Walter Rudin.
\newblock \textit{Principles of Mathematical Analysis}.
\newblock McGraw-Hill, 1976.
\newblock ISBN 9780070542358.

\bibitem{Ugwunnadi:2019}
G. C. Ugwunnadi, C. Izuchukwu, and O. T. Mewomo.
\newblock Strong convergence theorem for monotone inclusion problem in CAT(0) spaces.
\newblock \emph{Afrika Matematika}, 30(1):151--169, 2019.
\newblock \url{https://doi.org/10.1007/s13370-018-0633-x}
\newblock DOI: 10.1007/s13370-018-0633-x.





\end{thebibliography}

\end{document}